\documentclass[12pt]{article}
\usepackage{amssymb,amsmath, amsthm, dsfont}
\usepackage[mathscr]{euscript}
\usepackage{fullpage}
\usepackage{hyperref}
\usepackage{mathtools}
\usepackage{etoolbox}
\usepackage{hyperref}
\usepackage{indentfirst}

\usepackage{enumitem}
\setlist{  
  listparindent=\parindent,
  parsep=0pt,
}

\listfiles
\makeatletter
\newcounter{author}
\renewcommand*\author[1]{%
  \stepcounter{author}%
  \ifnum\c@author=1
  \gdef\@author{#1}%
  \else
  \xdef\@author{\unexpanded\expandafter{\@author\and#1}}%
  \fi
  \csgdef{author@\the\c@author}{#1}}
\newcommand*\email[1]{%
  \csgdef{email@\the\c@author}{#1}}
\newcommand*\address[1]{%
  \csgdef{address@\the\c@author}{#1}}
\AtEndDocument{%
  \xdef\author@count{\the\c@author}%
  \c@author=1
  \print@authors}
\newcommand*\print@authors{%
  \ifnum\c@author>\author@count
  \else
  \print@author{\the\c@author}%
  \advance\c@author by 1
  \expandafter\print@authors
  \fi}
\newcommand*\print@author[1]{%
  \par\medskip
  \begin{tabular}{@{}l@{}}%
    \textsc{\csuse{author@#1}}\\
    \csuse{address@#1}\\
    \textit{E-mail address}:
    \href{mailto:\csuse{email@#1}}{\csuse{email@#1}}
  \end{tabular}}
\makeatother

\newtheorem{theorem}{Theorem}

\newtheorem{remark}{Remark}

\newtheorem{corollary}{Corollary}[theorem]

\setlength{\topmargin}{-10mm}
\setlength{\textwidth}{7in}
\setlength{\oddsidemargin}{-8mm}
\setlength{\textheight}{9in}
\setlength{\footskip}{1in}

\newcommand{\supp}[1]{\mathrm{supp}\, #1}
\newcommand{\relint}[1]{\mathrm{relint}(#1)}
\newcommand{\relaint}[1]{\mathrm{core}(#1)}
\newcommand{\affhull}[1]{\mathrm{aff}(#1)}
\newcommand{\co}[1]{\mathrm{co}(#1)}

\title{On barycenters of probability measures}
\date{}
\author{Sergey Berezin}
\address{Aix-Marseille Universit\'e, Centrale Marseille, CNRS, Institut de Math\'ematiques de Marseille,\\ UMR7373, 39 Rue F. Joliot Curie 13453, Marseille, France; St. Petersburg Department of\\ V.A.~Steklov Mathematical Institute of RAS, 27 Fontanka 191023, St. Petersburg, Russia}
\email{servberezin@yandex.ru, sergey.berezin@univ-amu.fr}

\author{Azat Miftakhov}
\address{Moscow State University, Faculty of Mechanics and Mathematics,\\ 1 Leninskiye Gory 119991, Moscow, Russia}
\email{miftakhov-af@tutanota.com}

\begin{document}
\maketitle
\begin{abstract}
  A characterization is presented of barycenters of the Radon probability measures supported on a closed convex subset of a given space. A case of particular interest is studied, where the underlying space is itself the space of finite signed Radon measures on a metric compact and where the corresponding support is the convex set of probability measures. For locally compact spaces, a simple characterization is obtained in terms of the relative interior.
\end{abstract}

\noindent \textbf{1.} The main goal of the present note is to characterize barycenters of the Radon probability measures supported on a closed convex set. Let~$X$ be a Fr\'{e}chet space. Without loss of generality, the topology on~$X$ is generated by the translation-invariant metric~$\rho$ on~$X$ (for details see~\cite{Rudin}).

We denote the set of Radon probability measures on~$X$ by~$\mathscr{P}(X)$. By definition, the barycenter~$a \in X$ of a measure~$\mu \in \mathscr{P}(X)$ is
\begin{equation}
  \label{eq_bary_def1}
  a = \int \limits_X x\, \mu(dx)
\end{equation}
if the latter integral exists in the weak sense, that is, if the following holds
\begin{equation}
  \label{bary_weak}
  \Lambda a = \int \limits_X \Lambda x \, \mu(dx)
\end{equation}
for all~$\Lambda \in X^*$, where~$X^*$ is the topological dual of~$X$. More details on such integrals can be found in~\cite[Chapter~3]{Rudin}.

Note that if~\eqref{eq_bary_def1} exists, then
\begin{equation}
  a = \int \limits_{\supp{\mu}} x\, \mu(dx),
\end{equation}
and, by the Hahn--Banach separation theorem, one has~$a \in \overline{\co{\supp{\mu}}}$, where~$\co{\cdot}$ stands for the convex hull. From now on, we will use the bar over a set in order to denote its topological closure.

Following is the theorem that gives a characterization of barycenters of the measures from~$\mathscr{P}(X)$.
\begin{theorem}
  \label{th1}
  Let~$M \subset X$ be a non-empty compact convex set, and let~$a \in M$. Then, the following statements are equivalent:
  \begin{enumerate}
  \item[\textnormal{(i)}] There exist~$\mu \in \mathscr{P}(X)$ with~$\supp{\mu} = M$ and with barycenter~$a$; \label{th1_one}
  \item[\textnormal{(ii)}] One has
    \begin{equation}
      \label{eq_cond_th1}
      M = \overline{V_a},
    \end{equation}
    where~$V_a = \{x \in M|\, \exists \alpha>0: -\alpha x+(1+\alpha)a \in M \}$.\label{th1_two}
  \end{enumerate}
\end{theorem}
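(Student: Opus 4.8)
The plan is to establish the two implications (i)$\Rightarrow$(ii) and (ii)$\Rightarrow$(i) separately. Throughout I would use the standard fact (see~\cite[Chapter~3]{Rudin}) that every Radon probability measure carried by a compact convex subset~$K$ of a locally convex space has a barycenter, and that this barycenter lies in~$K$; a Fr\'echet space is locally convex, complete and metrizable, so this applies, and every compact subset of~$M$ is compact metrizable, hence separable. Since~$V_a\subseteq M$ and~$M$ is closed, one always has~$\overline{V_a}\subseteq M$, so in~(ii) only~$M\subseteq\overline{V_a}$ has to be proved.

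For (ii)$\Rightarrow$(i) I would pick a countable set~$\{x_n\}_{n\ge 1}$ dense in~$V_a$; since~$\overline{V_a}=M$, it is dense in~$M$ as well. For each~$n$ the condition~$x_n\in V_a$ gives~$\alpha_n>0$ with~$y_n:=-\alpha_n x_n+(1+\alpha_n)a\in M$, and the two-atom measure~$\nu_n:=\frac{\alpha_n}{1+\alpha_n}\delta_{x_n}+\frac{1}{1+\alpha_n}\delta_{y_n}$ has barycenter~$a$ by a one-line computation. Put~$\mu:=\sum_{n\ge 1}2^{-n}\nu_n$. Then~$\mu\in\mathscr{P}(X)$ is carried by the countable set~$\{x_n\}\cup\{y_n\}\subseteq M$, and since each~$x_n$ is a~$\mu$-atom of positive mass, $M=\overline{\{x_n\}}\subseteq\supp\mu\subseteq M$, i.e.~$\supp\mu=M$. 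Finally, each~$\Lambda\in X^*$ is bounded on the compact set~$M$, so~$\Lambda$ may be interchanged with the absolutely convergent series, giving~$\int_X\Lambda x\,\mu(dx)=\sum_n 2^{-n}\Lambda a=\Lambda a$; since~$X^*$ separates points, the barycenter of~$\mu$ is~$a$.

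For (i)$\Rightarrow$(ii), let~$\mu\in\mathscr{P}(X)$ with~$\supp\mu=M$ and barycenter~$a$, fix~$x_0\in M$ and a neighbourhood~$U$ of~$x_0$; it is enough to produce a point of~$V_a$ inside~$U$. If~$M=\{x_0\}$ then~$a=x_0\in V_a$ and we are done, so assume~$M\ne\{x_0\}$; choosing a point of~$\supp\mu$ distinct from~$x_0$ and separating it from~$x_0$ by open sets, we find a neighbourhood~$U'\subseteq U$ of~$x_0$ with~$\mu(U')<1$. Using that~$X$ is locally convex and regular, pick a \emph{convex} open neighbourhood~$W$ of~$x_0$ with~$\overline W\subseteq U'$; then~$K:=\overline W\cap M$ is compact and convex, and~$\lambda:=\mu(K)$ satisfies~$0<\lambda<1$ ($>0$ because~$x_0\in\supp\mu$, $<1$ because~$K\subseteq U'$). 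Write~$\mu=\lambda\mu_1+(1-\lambda)\mu_2$, where~$\mu_1$, $\mu_2$ are the normalized restrictions of~$\mu$ to~$K$ and to~$M\setminus\overline W$; these are Radon probability measures on the compact convex sets~$K$ and~$M$, hence have barycenters~$a_1\in K$ and~$a_2\in M$. Testing against every~$\Lambda\in X^*$ (bounded on~$M$) and using that~$X^*$ separates points yields~$a=\lambda a_1+(1-\lambda)a_2$, whence~$-\alpha a_1+(1+\alpha)a=a_2\in M$ with~$\alpha:=\lambda/(1-\lambda)>0$. Thus~$a_1\in V_a$ while~$a_1\in K\subseteq U$; as~$U$ was arbitrary, $x_0\in\overline{V_a}$, and as~$x_0\in M$ was arbitrary, $M\subseteq\overline{V_a}$.

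The step I expect to be the main obstacle is this last argument: one must ensure the conditional barycenter~$a_1$ genuinely lies in the prescribed neighbourhood~$U$ — which is exactly why \emph{convex} neighbourhoods must be used, so that~$K=\overline W\cap M$ is convex and contains its barycenter — and one must guarantee that the weight~$\lambda$ lies strictly between~$0$ and~$1$, which is why the degenerate case~$M=\{x_0\}$ and the choice of~$U'$ with~$\mu(U')<1$ have to be isolated first. The remaining ingredients — existence of barycenters on compact convex sets, and the behaviour of barycenters under countable mixtures and under conditioning — are routine.
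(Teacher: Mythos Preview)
Your proof is correct and follows essentially the same approach as the paper: the construction for (ii)$\Rightarrow$(i) is identical, and for (i)$\Rightarrow$(ii) both arguments split~$\mu$ into its restriction to a small neighbourhood of the given point and the complement, and observe that the conditional barycenter over the neighbourhood lies in~$V_a$. The only difference is a technical one in how you land that barycenter near the target: you use a \emph{convex} neighbourhood~$W$ so that~$a_1\in\overline W\cap M\subseteq U$ directly, whereas the paper conditions on metric balls~$U_\delta(c)$ (not assumed convex), shows the resulting barycenters~$c_\delta$ converge to~$c$ weakly as~$\delta\to0$, and then invokes that for the convex set~$V_a$ the weak and strong closures coincide.
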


\begin{remark}
  We note that the condition~\eqref{eq_cond_th1} is \textit{non-local} and concerns the whole set~$M$.
\end{remark}
\begin{remark}
  \label{rem2}
  We require~$M$ to be compact in order to ensure the separability of~$M$ and existence of weak integrals (e.g., see~\cite[Theorem~3.27]{Rudin}). If~$X$ is finite-dimensional, the theorem holds without this requirement.
\end{remark}

\begin{proof}[Proof of Theorem~\ref{th1}]
  \begin{enumerate}
  \item[\textnormal{(a)}] First, we prove that~$\mathrm{(i)} \Rightarrow \mathrm{(ii)}$. Let~$c \in M$, and let ~$U_\delta(c)$ be the open ball of radius~$\delta>0$ centered at~$c$. Because~$M$ is the support of~$\mu$, one has~$\mu(U_\delta(c))>0$. Also, since~$M$ is compact, so is~$\overline{U_\delta(c) \cap M}$, and
    \begin{equation}
      c_\delta = \frac{1}{\mu(U_\delta(c))}\int \limits_{U_\delta(c)} x\, \mu(dx) \in M
    \end{equation}
    is well-defined altogether. It is easy to show that
    \begin{equation}
      \lim \limits_{\delta \to +0} c_\delta = c
    \end{equation}
    in the weak topology~$\sigma(X, X^*)$. Indeed, take any~$\Lambda \in X^*$. Since~$\Lambda$ is continuous, for every~$\varepsilon>0$ there exist~$\delta_0>0$ such that~$x \in U_{\delta_0}(c)$ implies~$|\Lambda(x) -\Lambda(c)|=|\Lambda(x-c)|<\varepsilon$. And it follows from the definition of the weak integral that
    \begin{equation}
      \label{eq_lambd_est}
      |\Lambda(c_\delta-c)| \le \frac{1}{\mu(U_\delta(c))}\int \limits_{U_\delta(c)} |\Lambda(x-c)|\, \mu(dx) < \varepsilon
    \end{equation}
    whenever~$\delta \in (0,\delta_0)$. This means that~$c_\delta \to c$ in the weak topology as~$\delta\to +0$.

    Further, for any~$\delta>0$, either~$\mu(U_\delta(c)) = 1$ or~$0<\mu(U_\delta(c))<1$. We show that in both cases~$c_\delta \in V_a$. Indeed, if~$\mu(U_\delta(c)) = 1$, then~$c_\delta = a$ and thus~$c_\delta \in V_a$. If~$0<\mu(U_\delta(c))<1$, set
    \begin{equation}
      \tilde{c}_\delta = \frac{1}{\mu(X \setminus U_\delta(c))}\int \limits_{X \setminus U_\delta(c)} x\, \mu(dx) \in M.
    \end{equation}
    Clearly, $\alpha c_\delta + (1-\alpha) \tilde{c}_\delta \in M$,~$\alpha \in [0,1]$, by convexity. Moreover,
    \begin{equation}
      a = \mu(U_\delta(c)) c_\delta + \big(1-\mu(U_\delta(c))\big) \tilde{c}_\delta.
    \end{equation}
    Therefore, by a simple geometric argument and by the definition of~$V_a$, it is clear that~$c_\delta \in V_a$.

    Since~$X$ is a locally convex space and since~$V_a$ is convex, the closures of~$V_a$ in the weak and original topologies coincide. Consequently, by passing to the limit~$\delta \to +0$, one arrives at
    \begin{equation}
      c = \lim \limits_{\delta \to +0} c_{\delta} \in \overline{V_a}.
    \end{equation}
    This concludes the proof of the claim.

  \item[\textnormal{(b)}] Next, we prove that~$\mathrm{(ii)} \Rightarrow \mathrm{(i)}$ by constructing~$\mu \in \mathscr{P}(X)$ with support~$M$ and barycenter~$a$.

    Being a metric compact, $M$ is separable, hence there exist~$M_0$ such that~$\overline{M_0} = M = \overline{V_a}$. And without loss of generality, one can think that~$M_0=\{x_n\}_{n=1}^\infty \subset V_a$ and~$\overline{\{x_n\}_{n=1}^\infty} = M$. By the definition of~$V_a$, there exist~$\{\alpha_n\}_{n=1}^\infty$ such that~$\alpha_n>0$ and $-\alpha_n x_n+(1+\alpha_n)a \in M$.

    Let us define the discrete measure
    \begin{equation}
      \label{eq_meas_sep}
      \mu = \sum \limits_{k=1}^\infty \frac{1}{2^n} \cdot \frac{\alpha_n \delta_{x_n} + \delta_{-\alpha_n x_n+(1+\alpha_n)a}}{1+\alpha_n},
    \end{equation}
    where~$\delta_x$ is the delta-measure at~$x$. Clearly, this is a Radon probability  measure, and a simple computation shows that its barycenter is~$a$. Indeed, for every~$\Lambda \in X^*$ one has
    \begin{equation}
      \int \limits_{X} \Lambda x\, \mu(dx) = \sum \limits_{k=1}^\infty \frac{1}{2^n} \cdot \frac{\alpha_n \Lambda x_n + (-\alpha_n \Lambda x_n+(1+\alpha_n) \Lambda a)}{1+\alpha_n}= \Lambda a.
    \end{equation}
    
    It remains to prove that~$\supp{\mu}=M$. First, we note that~$\{x_n\}_{n=1}^\infty \subset \supp{\mu}$. Consequently, $M = \overline{\{x_n\}_{n=1}^\infty} \subset \supp{\mu}$, and therefore~$M \subset \supp{\mu}$. By the definition~\eqref{eq_meas_sep}, one also has~$\supp{\mu} \subset M$, which concludes the proof.
    
  \end{enumerate}
  \end{proof}

Further, we will use the following standard notation from convex analysis. For~$a, b \in X$ we define the (line) segment~$[a,b]$ and the open (line) segment~$(a,b)$ by
\begin{equation}
  \begin{aligned}
    &[a,b] = \{x\in X| x = (1-\lambda)a + \lambda b,\, \lambda \in [0,1] \},\\
    &(a,b) = \{x\in X| x = (1-\lambda)a + \lambda b,\, \lambda \in (0,1) \}.
  \end{aligned}
\end{equation}

Let us recall that the \textit{relative interior} of a set~$M$ is
\begin{equation}
  \label{eq_relint_def}
  \relint{M} = \{x \in M|\, \exists U(x): U(x)\cap \affhull{M} \subset M\},
\end{equation}
where~$U(x)$ is an open neighborhood of~$x$ and~$\affhull{M}$ is the affine hull of~$M$. Also, we recall that the \textit{relative algebraic interior} of~$M$ is
\begin{equation}
  \label{eq_relaint_def}
  \relaint{M} = \{x \in M|\, \forall y \in \affhull{M}\ \exists \alpha>0:  [x, -\alpha y+ (1+\alpha)x] \subset M\}.
\end{equation}

It is well-known that any locally compact topological vector space is finite-dimensional (e.g., see~\cite{Rudin}), in which case the following corollary takes place.
\begin{corollary}
  \label{cor1}
  If~$X$ is a locally compact space and $M \subset X$ is a non-empty closed convex set, then the set of barycenters of the Borel probability measures with support~$M$coincides with the relative interior of~$M$.
\end{corollary}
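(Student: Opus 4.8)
The plan is to derive the corollary directly from Theorem~\ref{th1}. Since a locally compact topological vector space is finite-dimensional, Remark~\ref{rem2} applies: Theorem~\ref{th1} holds for every non-empty closed convex $M \subset X$ without the compactness hypothesis, so it suffices to establish the set equality $\{a \in M : M = \overline{V_a}\} = \relint{M}$ and to observe that the two relevant classes of measures coincide here. For the latter point, a finite-dimensional space is Polish, hence every finite Borel measure on it is tight and therefore Radon, so $\mathscr{P}(X)$ is precisely the family of Borel probability measures. Before proving the set equality I would make two harmless reductions: the case $M = \{a\}$ is trivial, so assume $M$ has more than one point; and, after translating so that $0 \in M$, I would replace the ambient space $X$ by the linear span $\affhull{M}$ — which affects neither the support nor the barycenter of a measure carried by $M$, and preserves Radon-ness — so that $\relint{M}$ becomes the ordinary interior of $M$, which is non-empty because $M$ is a non-empty convex set in a finite-dimensional space.

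It is convenient to restate membership in $V_a$ geometrically: writing $-\alpha x + (1+\alpha)a = a + \alpha(a-x)$, one sees that $x \in M$ lies in $V_a$ exactly when the segment $[x,a]$ admits a strict prolongation beyond $a$ that stays in $M$. The inclusion $\relint{M} \subset \{a : M = \overline{V_a}\}$ is then immediate and in fact yields $V_a = M$: for $a \in \mathrm{int}\,M$ and any $x \in M$, the point $a + \alpha(a-x)$ belongs to $\affhull{M} = X$ and lies in any prescribed neighbourhood of $a$ once $\alpha > 0$ is small, hence in $M$; thus $x \in V_a$.

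For the converse inclusion I would argue by contraposition. Suppose $a \in M \setminus \mathrm{int}\,M$, so that $a$ is a boundary point of the convex body $M$. The finite-dimensional supporting hyperplane theorem then provides $\Lambda \in X^*$ with $\Lambda x \le \Lambda a$ for all $x \in M$ and with $\Lambda$ non-constant on $M$ — non-constancy being forced by $M$ having non-empty interior. For any $x \in V_a$, pick $\alpha > 0$ with $y = (1+\alpha)a - \alpha x \in M$; then $\Lambda y \le \Lambda a$ rearranges to $\Lambda x \ge \Lambda a$, hence $\Lambda x = \Lambda a$. Therefore $V_a \subset \{x \in M : \Lambda x = \Lambda a\}$, which is a closed proper subset of $M$, so $\overline{V_a} \ne M$. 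Combining the two inclusions gives $\{a \in M : M = \overline{V_a}\} = \relint{M}$, and by Theorem~\ref{th1} the left-hand side is exactly the set of barycenters of probability measures with support $M$; since $\relaint{M} = \relint{M}$ for convex sets in finite dimensions, the conclusion may be phrased with either notion. The only ingredient that is not pure bookkeeping is the existence of the supporting hyperplane at a relative boundary point, which is where finite-dimensionality is genuinely used beyond the initial reduction to $\mathbb{R}^d$.
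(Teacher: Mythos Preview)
Your proposal is correct, and its overall architecture matches the paper's: invoke finite-dimensionality to upgrade Borel to Radon, apply Theorem~\ref{th1} (via Remark~\ref{rem2}), and verify that $\{a\in M:\overline{V_a}=M\}=\relint{M}$. The inclusion $\relint{M}\subset\{a:\overline{V_a}=M\}$ is handled essentially the same way in both.

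The converse inclusion, however, is argued differently. The paper proceeds \emph{directly}: from $\overline{V_a}=M$ it uses the finite-dimensional fact $\relint{V_a}=\relint{\overline{V_a}}=\relint{M}$, picks $x\in\relint{V_a}\subset V_a$, writes $a\in(x,y)$ for some $y\in M$ by the definition of $V_a$, and then slides a relative neighborhood of $x$ along the segment toward $y$ to exhibit a relative neighborhood of $a$ inside $M$. You instead argue by \emph{contraposition} via the supporting-hyperplane theorem: a boundary point $a$ admits a nontrivial $\Lambda$ with $\Lambda\le\Lambda a$ on $M$, forcing $V_a\subset M\cap\{\Lambda=\Lambda a\}$, a proper closed face. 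Your route is shorter and more conceptual---it pins down $V_a$ as lying in an exposed face---while the paper's argument stays entirely within elementary relative-interior manipulations and avoids the separation theorem. Both genuinely use finite-dimensionality (you for the existence of the supporting hyperplane and for $\mathrm{int}\,M\ne\varnothing$ after passing to $\affhull{M}$; the paper for $\relint{V_a}\ne\varnothing$).
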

\begin{proof}
  We note that in finite-dimensional spaces any probability Borel measure is Radon. It is also well-known (see~\cite{Zalinescu}) that in such spaces the relative interior and the relative algebraic interior of~$M$ coincide and are non-empty.

  Now, let~$a \in \relint{M} = \relaint{M}$ be any point. By the definition of the relative algebraic interior, for every~$y \in M \subset \affhull{M}$ the segment~$[y,a]$ can be prolonged beyond the point~$a$ within~$M$. This means that~$y \in V_a$, and thus~$M \subset V_a$. Hence, by Theorem~\ref{th1} (see also Remark~\ref{rem2}) there exist a Radon probability measure on~$X$ with~$\supp{\mu}=M$ and barycenter~$a$.

  It is left to prove that if for some~$a \in M$ one has~$\overline{V_a}=M$, then~$a \in \relint{M}$. Notice that~$V_a$ is a non-empty convex set. Since we deal with the finite-dimensional space, $V_a$ has a non-empty relative interior; and~$\relint{V_a} = \relint{\overline{V_a}} = \relint{M}$. Let~$x$ belong to~$\relint{V_a} \subset V_a$. It follows from the definition of~$V_a$ that there exist a segment~$[x,y] \subset M$ such that~$a \in (x,y)$. Since~$x$ also belongs to~$\relint{M}$, there exist an open neighborhood~$U(x)$ of~$x$, satisfying~$U(x)\cap \affhull{M} \subset M$.

  By convexity of~$M$, one obtains
  \begin{equation}
    \label{eq_col1.1_proof_eq1}
    (1-\lambda) (U(x)\cap \affhull{M}) + \lambda y \subset M, \quad \lambda \in [0,1].
  \end{equation}
  It is also easy to verify directly that
  \begin{equation}
    \label{eq_col1.1_proof_eq2}
    (1-\lambda) (U(x)\cap \affhull{M}) + \lambda y = \big((1-\lambda) U(x) + \lambda y\big)\cap \affhull{M}, \quad \lambda \in [0,1].
  \end{equation}
  Combining~\eqref{eq_col1.1_proof_eq1} and \eqref{eq_col1.1_proof_eq2}, and noticing that for~$\lambda \in [0,1)$ the set~$(1-\lambda) U(x) + \lambda y$ is an open neighborhood of the point~$(1-\lambda) x + \lambda y$, one sees that any point of~$(x,y)$ belongs to~$\relint{M}$ by the very definition~\eqref{eq_relint_def}. In particular, this means that~$a \in \relint{M}$.
\end{proof}

\noindent \textbf{2.} It is tempting to think that Corollary~\ref{cor1} takes place in infinite-dimensional spaces as well. Unfortunately, this is not the case even for Hilbert spaces, as the following counterexample shows.

Let~$X$ be the Hilbert space of real sequences endowed with the $l^2$-scalar product, and let~$M$ be the Hilbert cube, a compact convex set,
\begin{equation}
  M=\prod \limits_{k=1}^\infty \left[-\frac{1}{k}, \frac{1}{k}\right].
\end{equation}

We take~$a=\{a_k\}_{k=1}^{\infty} \in M$, where~$a_k=\frac{1}{k+1}$. It is easy to construct a measure~$\mu_k \in \mathscr{P}(\mathbb{R})$ with ~$\supp{\mu_k} = [-1/k, 1/k]$ such that
\begin{equation}
  \int \limits_{[-1/k,1/k]} x\, \mu_k(dx) = \frac{1}{k+1}.
\end{equation}
Having done that, consider the product of measures restricted to~$X$
\begin{equation}
  \mu = \left.\bigotimes_{k=1}^\infty \mu_k\right|_X.
\end{equation}
One usually defines the product of measures on the product of spaces, having in mind the product topology. Even though the corresponding induced topology on~$X$ is strictly coarser than the $l_2$-norm topology, they both generate the same Borel sigma-algebra on~$X$. Thus, it is clear that~$\mu$ can be regarded as a Borel measure on the Hilbert space~$X$; moreover, since~$X$ is a complete and separable metric space, $\mu$ is Radon.

It is clear by construction that~$\supp{\mu} \subset M$. We prove that the other inclusion by reductio ad absurdum. Let~$b \in M$ and suppose that~$\mu(U_{\varepsilon}(b))=0$ for some~$\varepsilon>0$, where~$U_{\varepsilon}(b)$ is the ball of radius~$\varepsilon$ centered at~$b$.

One can choose~$N$ such that
\begin{equation}
  \sum\limits_{n>N} \frac{4}{n^2} < \frac{\varepsilon^2}{2}. 
\end{equation}
Then
\begin{equation}
  \begin{aligned}
    0 = \mu\left\{x \in X \left|\, \sum \limits_{n=1}^{\infty} (x_n - b_n)^2 < \varepsilon^2 \right. \right\} \ge \mu\left\{x \in M \left|\, \sum \limits_{n=1}^{N} (x_n - b_n)^2  < \frac{\varepsilon^2}{2}\right. \right\}\\
    = \bigotimes_{k=1}^N \mu_k\left\{x \in M \left|\, \sum \limits_{n=1}^{N} (x_n - b_n)^2  < \frac{\varepsilon^2}{2}\right. \right\}.
  \end{aligned}
\end{equation}
The latter is clearly positive, which gives a contradiction and yields~$\supp{\mu} = M$. 

Now, we prove that~$a$ is the barycenter of~$\mu$. Thanks to the Riesz representation theorem, there exist~$\{\lambda_k\}_{k=1}^\infty \in X$ such that for every~$x=\{x_k\}_{k=1}^\infty \in X$ one has
\begin{equation}
  \Lambda{x} = \sum_{k=1}^\infty \lambda_k x_k.
\end{equation}
By the definition of the barycenter we write
\begin{equation}
  \int \limits_X \Lambda{x}\, \mu(dx) = \int \limits_M \sum_{k=1}^\infty \lambda_k x_k\, \mu(dx) = \sum_{k=1}^\infty  \lambda_k \int \limits_M  x_k\, \mu(dx) = \sum_{k=1}^\infty  \lambda_k a_k = \Lambda{a},
\end{equation}
where one can interchanged the sum and the integral by dominated convergence since~$M$ is a bounded set in~$X$. This shows that~$a$ is indeed the barycenter of~$\mu$.

Next, we recall that in infinite-dimensional spaces the relative interior and relative algebraic interior do not necessarily coincide (see~\cite{Zalinescu}). However, from~\eqref{eq_relint_def} and~\eqref{eq_relaint_def} one sees that the former is a subset of the latter. Thus, it is sufficient to show that~$a$ does not belong to the relative algebraic interior of~$M$. We prove this claim, again, by contradiction.

Suppose that~$a \in \relaint{M}$. Then the segment~$[0,a]$ can be prolonged beyond the point~$a$ within~$M$. In other words, there exist~$\alpha>0$ such that~$(1+\alpha) a \in M$. The latter is equivalent to
\begin{equation}
  -\frac{1}{k} \le (1+\alpha) a_k \le \frac{1}{k},\quad k=1,2,\ldots. 
\end{equation}
Multiplying by~$k+1$ and letting~$k \to \infty$ yield
\begin{equation}
  -1 \le (1+\alpha) \le 1,
\end{equation}
which contradicts~$\alpha>0$ and concludes the proof.

\noindent \textbf{3.}
Now, we describe the set of barycenters of measures on the space of probability measures. Let~$K$ be a metric compact space and~$X=\mathscr{M}(K)$ the space of signed finite Radon measures on~$K$. By the Riesz--Markov theorem, $X$ can be identified with the topological dual~$C^*(K)$ of the space~$C(K)$ of continuous functions on~$K$. We endow~$C^*(K)$ with the weak-* topology~$\sigma(C^*(K), C(K))$. Having in mind the canonical embedding~$C(K) \hookrightarrow C^{**}(K)$, one can say that this topology is the weakest topology which makes continuous all the functionals from~$C^{**}(K)$ that correspond to elements of~$C(K)$. This topology is locally convex as is the corresponding topology~$\tau_w$ on~$X$. The restriction of~$\tau_w$ to the convex set~$M=\mathscr{P}(K) \subset X$ of probability measures on~$K$ produces the usual topology of weak convergence on~$M$ and thus makes this set compact.

The barycenter~$\mu \in X$ of a measure~$\eta \in \mathscr{P}(X)$ is, by definition,
\begin{equation}
  \mu = \int \limits_X \nu\, \eta(d\nu)
\end{equation}
if the latter integral exists in the weak sense. That is, since~$(C^*(K))' = C(K)$, where~$(\cdot)'$ is the topological dual in the weak-* topology, $\mu$ is the barycenter of~$\eta$ if and only if for every~$f \in C(K)$,
\begin{equation}
  \label{eq_3_mu}
  \int \limits_{K}f(x) \mu(dx) = \int \limits_{X} \left(\int \limits_{K} f(x)\nu(dx)\right)\, \eta(d\nu).
\end{equation}

Also, note that
\begin{equation}
  \mu = \int \limits_{\supp{\eta}} \nu\, \eta(d\nu),  
\end{equation}
and, by the Hahn--Banach separation theorem, one has~$\mu \in \overline{\co{\supp{\eta}}}$.

The following result characterizes measures from~$X$ with support~$M$.
\begin{theorem}
  The set of barycenters of the measures from~$\mathscr{P}(X)$ with support~$M$ coincides with the set of the measures from~$M$ with support~$K$.
\end{theorem}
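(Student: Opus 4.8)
The plan is to derive the statement from Theorem~\ref{th1}, applied with the locally convex Hausdorff space $X=\mathscr{M}(K)=C^*(K)$ under $\tau_w$ and with the convex set $M=\mathscr{P}(K)$. The space $X$ is not Fr\'echet, but the proof of Theorem~\ref{th1} uses only that $M$ is a \emph{metrizable} compact convex subset of a locally convex space: part~(a) works with open balls taken inside $M$, and part~(b) needs only that $M$ be separable. Since $K$ is a metric compact, $C(K)$ is separable, hence $\tau_w$ is metrizable on the $\tau_w$-compact set $M$, and the argument of Theorem~\ref{th1} goes through unchanged. Thus a measure $\mu\in M$ is the barycenter of some $\eta\in\mathscr{P}(X)$ with $\supp{\eta}=M$ if and only if $M=\overline{V_\mu}$ (closure in $\tau_w$), where $V_\mu=\{\nu\in M\mid\exists\alpha>0:\ (1+\alpha)\mu-\alpha\nu\in M\}$. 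The first genuine step is then to make $V_\mu$ explicit: the signed measure $(1+\alpha)\mu-\alpha\nu$ always has total mass $1$, so it lies in $M=\mathscr{P}(K)$ exactly when it is nonnegative, i.e. when $\alpha\nu\le(1+\alpha)\mu$; hence $V_\mu=\{\nu\in\mathscr{P}(K)\mid \nu\le C\mu\ \text{for some}\ C>0\}$, and in particular every $\nu\in V_\mu$ is absolutely continuous with respect to $\mu$. It remains to show $M=\overline{V_\mu}\iff\supp{\mu}=K$.

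For $\supp{\mu}=K\Rightarrow M=\overline{V_\mu}$, fix $x\in K$. Since $\supp{\mu}=K$, every open ball $U_r(x)\subseteq K$ has $\mu(U_r(x))>0$, so the normalized restriction $\nu_{x,r}$, defined by $\nu_{x,r}(A)=\mu(A\cap U_r(x))/\mu(U_r(x))$, satisfies $\nu_{x,r}\le\mu/\mu(U_r(x))$ and therefore $\nu_{x,r}\in V_\mu$. For $f\in C(K)$ one has $\int_K f\,d\nu_{x,r}=\mu(U_r(x))^{-1}\int_{U_r(x)}f\,d\mu\to f(x)$ as $r\to+0$ by continuity of $f$ at $x$; this is the analogue of the estimate~\eqref{eq_lambd_est} in part~(a) of the proof of Theorem~\ref{th1}, and it gives $\nu_{x,r}\to\delta_x$ in $\tau_w$, whence $\delta_x\in\overline{V_\mu}$ for every $x\in K$. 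Since $V_a$ is convex for every $a$ (as used in the proof of Theorem~\ref{th1}), $\overline{V_\mu}$ is a $\tau_w$-closed convex set containing all Dirac masses, hence all finitely supported probability measures, and these are $\tau_w$-dense in $\mathscr{P}(K)$; therefore $\overline{V_\mu}\supseteq\mathscr{P}(K)=M$, while $\overline{V_\mu}\subseteq M$ is immediate from $V_\mu\subseteq M$ and the $\tau_w$-closedness of $M$.

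For the converse I would argue by contraposition. If $\supp{\mu}\ne K$, then $G:=K\setminus\supp{\mu}$ is non-empty and open with $\mu(G)=0$; pick $x_0\in G$ and, by Urysohn's lemma, $f\in C(K)$ with $0\le f\le1$, $f(x_0)=1$ and $f\equiv0$ on $K\setminus G$. For every $\nu\in V_\mu$ one has $\nu\ll\mu$, so $\nu(G)=0$ and $\int_K f\,d\nu=0$; as $\nu\mapsto\int_K f\,d\nu$ is $\tau_w$-continuous, the $\tau_w$-closed set $\{\nu\in X\mid\int_K f\,d\nu=0\}$ contains $V_\mu$, hence contains $\overline{V_\mu}$. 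But $\delta_{x_0}\in M$ and $\int_K f\,d\delta_{x_0}=1\ne0$, so $\delta_{x_0}\in M\setminus\overline{V_\mu}$ and $M\ne\overline{V_\mu}$. Together with the previous paragraph this proves the theorem. The step demanding the most care is the initial one — verifying that the proof of Theorem~\ref{th1} survives the passage to the non-metrizable ambient space $X$, which it does precisely because all of its metric and separability arguments take place inside the compact set $M$, metrizable here thanks to the separability of $C(K)$; after that, everything reduces to the explicit description of $V_\mu$ and the standard density of discrete measures in $\mathscr{P}(K)$.
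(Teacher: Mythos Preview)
Your proof is correct and takes a genuinely different route from the paper. The paper does \emph{not} invoke Theorem~\ref{th1} at all: for the direction ``barycenter $\Rightarrow$ full support'' it argues directly from~\eqref{eq_3_mu}, showing that if $\int_K f\,d\mu=0$ for some nonnegative nonzero $f\in C(K)$ then $\int_K f\,d\nu=0$ holds $\eta$-a.e.\ on $M$, hence (by continuity and $\supp\eta=M$) for all $\nu\in M$, in particular for $\nu=\delta_x$; for the converse it builds $\eta$ explicitly as the pushforward of a product measure $\lambda\otimes\mu^\infty$ on $A\times K^\infty$ under the map $(a,x)\mapsto\sum_j a_j\delta_{x_j}$. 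Your approach instead recognises that Theorem~\ref{th1} applies once one observes that its proof only needs $M$ to be a metrizable compact convex set in a locally convex space (so that balls can be taken inside $M$ and separability is available), then computes $V_\mu=\{\nu\in\mathscr{P}(K):\nu\le C\mu\ \text{for some }C>0\}$ and reduces the question to the equivalence $\overline{V_\mu}=M\iff\supp\mu=K$, which you settle with the local averages $\nu_{x,r}\to\delta_x$ and a Urysohn separation. Your argument is conceptually cleaner and exhibits Theorem~2 as a direct instance of the general criterion, at the cost of the meta-step of transplanting Theorem~\ref{th1} to a non-Fr\'echet ambient space; the paper's proof is fully self-contained and, in part~(b), yields a concrete $\eta$ rather than the discrete measure coming from Theorem~\ref{th1}'s construction.
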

\begin{proof}
  \begin{enumerate}
  \item[\normalfont{(a)}] First, we prove that the barycenter of a measure from~$\mathscr{P}(X)$ with support~$M$ is a measure from~$M$ with support~$K$.
    
    Take any~$\eta \in \mathscr{P}(X)$ such that~$\supp{\eta} = M$, and let~$\mu \in M$ be its barycenter. We prove that~$\supp{\mu}=K$ by contradiction.

    Indeed, suppose this is not the case. Then, there exist a non-zero nonnegative continuous bounded function~$f \in C_b(K)$ such that
    \begin{equation}
      \label{eq_3_eq1}
      \int \limits_K f(x) \mu(dx)=0.
    \end{equation}
    Using~\eqref{eq_3_mu} one gets
    \begin{equation}
      \int\limits_M \int \limits_{K} f(x) \nu(dx) \eta(d \nu) = 0,
    \end{equation}
    and since the integrand is non-negative,  
    \begin{equation}
      \label{eq_3_eq2}
      \int \limits_{K} f(x) \nu(dx) = 0
    \end{equation}
    $\eta$-almost surely on~$M$.

    The latter, in fact, holds for all~$\nu \in M = \mathscr{P}(K)$ due to continuity in~$\nu$ of the left-hand side of~\eqref{eq_3_eq2} with respect to the topology of weak convergence.

    Consequently, by choosing~$\nu$ to be the delta measure at an arbitrary point in~$K$, one immediately obtains 
    \begin{equation}
      f(x)=0, \quad x \in K.
    \end{equation}
    This contradicts the fact that~$f$ is non-zero and concludes the proof of the claim. 
  \item[\normalfont{(b)}]
    Now, assume that~$\mu \in M$ and~$\supp{\mu}=K$. Let
    \begin{equation}
      A=\left\{(a_1,a_2,\ldots) \in [0,1]^\infty\left|\, a_j\ge 0, \sum_{j=1}^{\infty} a_j=1\right.\right\}
    \end{equation}
    be a closed subset of~$[0,1]^\infty$ endowed with the~$l_1$-norm. Since~$A$ is separable, there exist a Radon probability measure~$\lambda$ on~$[0,1]^\infty$ with support~$A$ (e.g., see the proof of Theorem~\ref{th1}).

    Let us also introduce the Radon probability measure~$\lambda \otimes \mu^\infty = \lambda \otimes \bigotimes_{j=1}^\infty \mu_j$ on $A\times K^\infty = A \times \prod_{j=1}^\infty K_j$, where the~$\mu_j$ are copies of~$\mu$ and the~$K_j$ are copies of~$K$. It is easy to see that
    \begin{equation}
      \label{eq_supp_lambda_muinf}
      \supp{\lambda \otimes \mu^\infty} = A \times K^{\infty}.
    \end{equation}
    Indeed, for any open neighborhood~$U(c)$ of~$c=(c_a;c_1,\ldots) \in A \times K^{\infty}$, by the definition of the product topology, there exist an open set of the form
    \begin{equation}
      U_a(c_a) \times \prod_{j=1}^\infty U_j(c_j), 
    \end{equation}
    where~$U_a(c_a) \subset A$ and~$U_j(c_j) \subset K_j$ are open neighborhoods of~$c_a$ and~$c_j$, respectively, such that~$U_j(c_j) \ne K_j$ only for finitely many~$j \in \mathbb{N}$. Then, for large enough~$N$ one has
    \begin{equation}
      (\lambda \otimes \mu^\infty)(U(c)) \ge \lambda(U_a(c_a)) \prod_{j=1}^N  \mu(U_j(c_j))>0,
    \end{equation}
    which proves~\eqref{eq_supp_lambda_muinf}.

    The next step is to define the map~$F: A \times K^{\infty} \to M$ by
    \begin{equation}
      F(a, x) = \sum \limits_{j=1}^{\infty} a_j \delta_{x_j}.
    \end{equation}
    It is easy to show that~$F$ is continuous. Indeed, let~$a^{(n)} \to a^* \in A$ in~$l_1$-norm and~$x^{(n)} \to x^* \in K^\infty$ in the product topology. We will prove that~$F(a^{(n)},x^{(n)})$ converges to~$F(a^{*},x^{*})$ weakly. For every~$f \in C(K)$,
    \begin{equation}
      \begin{aligned}
        \left|\int \limits_Kf(y)\, F(a^{(n)},x^{(n)})(dy) - \int \limits_Kf(y) F(a^{*},x^{*})(dy)\right| \le \sum \limits_{j=1}^{\infty}|a_j^{(n)}f(x_j^{(n)}) - a_j^{*}f(x_j^{*})|\\
        \le \sup_{x\in K}{|f(x)|}\, \|a^{(n)} - a^{*}\|_{l_1} + \sum \limits_{j=1}^{\infty} a_j^{*}|f(x_j^{(n)}) - f(x_j^{*})| \to 0,
      \end{aligned}
    \end{equation}
    where the latter term tends to zero thanks to the dominated convergence theorem. This proves the continuity of~$F$.
    
    Now, let us define the measure~$\eta$ to be the pushforward of~$\lambda \otimes \mu^{\infty}$ under~$F$:
    \begin{equation}
      \label{eq_prod_meas}
      \eta = (\lambda \otimes \mu^{\infty}) \circ F^{-1},
    \end{equation}
    which is readily verified to be a Radon probability measure.

    We prove that this measure is supported on~$M$. Indeed, since it is known (e.g., see~\cite[Ex. 8.1.6]{Bogachev}) that
    \begin{equation}
      \overline{F(A \times K^{\infty})} = M,
    \end{equation}
    for every open neighborhood~$U(\nu)$ of~$\nu \in M$ there exist~$(a,x) \in A \times K^\infty$ such that~$F(a,x) \in U(\nu)$. Consequently, due to~$F$ being continuous and due to~\eqref{eq_supp_lambda_muinf}, one has~$\eta(U(\nu))>0$, and thus, since~$\nu$ is arbitrary,~$\supp{\eta} = M$.

    It is left to check that the barycenter of~$\eta$ is~$\mu$. One can write
    \begin{equation}
      \label{eq_t2_proof_baryc}
      \begin{aligned}
        &\int\limits_M \int \limits_{K} f(y) \nu(dy) \eta(d \nu) = \int \limits_{A \times K^{\infty}} \int\limits_{K} f(y)\, F(a,x)(dy)\, (\lambda \otimes \mu^\infty)(da,dx)\\
        &= \sum \limits_{j=1}^{\infty} \int \limits_{A} a_j \lambda(da) \int\limits_{K^{\infty}} f(x_j)  \mu^{\infty}(dx)= \sum \limits_{j=1}^{\infty} \int \limits_{A} a_j \lambda(da) \int\limits_{K} f(x)  \mu(dx) =\int\limits_K f(x) \mu(dx),
      \end{aligned}
    \end{equation}
    where we use the definition~\eqref{eq_prod_meas} of~$\eta$, Fubini's theorem, and the dominated convergence to interchange the sum and the integrals.

    According to~\eqref{eq_3_mu}, the formula~\eqref{eq_t2_proof_baryc} means exactly that the barycenter of~$\eta$ is~$\mu$. This concludes the proof of the theorem. 
  \end{enumerate}
\end{proof}

As the final remark we point out that our proof relies heavily on the fact that~$K$ is compact; even though, barycenters are well-defined for a wider class of Radon probability measures (with finite first moments). An open question of interest is to characterize such measures as well.

\bigbreak

\noindent \textbf{Acknowledgments.} We would like to thank V.~Bogachev for bringing the problem considered in this note to our attention and A.~Borichev for helpful discussions and valuable comments. Also, we greatly appreciate the detailed responses of the reviewers. Their remarks and comments, without a doubt, helped us improve our paper.

S.B. is supported by the European Research Council (ERC) under the European Union’s Horizon 2020 research and innovation programme, grant  647133 (ICHAOS). A.M. is supported by the RFBR grants 14-01-90406, 14-01-00237 and the SFB 701 at Bielefeld University.

\newpage

\begin{thebibliography}{99}
\bibitem{Bogachev}{V.I.~Bogachev, \textit{Measure theory}, Vol. 2, Springer, Berlin, 2007.}
\bibitem{Rudin}{W.~Rudin, \textit{Functional analysis}, McGraw-Hill, New York, 1991.}
\bibitem{Zalinescu}{C.~Z\u{a}linescu, \textit{Convex analysis in general vector spaces}, World Sci., River Edge, NJ, 2002.}
\end{thebibliography}
\end{document}